\newtheorem{theorem}{Theorem}[section]
\newtheorem*{theorem*}{Theorem}
\newtheorem{rem}[theorem]{Remark}
\newtheorem{claim}{Claim}[section]
\theoremstyle{definition}
\newtheorem{definition}[theorem]{Definition}
\newcommand{\cts}{C(2^\nn)}
\newcommand{\nn}{\mathbb{N}}
\begin{document}

\title{On a generalization of Bourgain's tree index}

\author{Kevin Beanland}
\address{Department of Mathematics, Washington and Lee University, Lexington, VA 24450.}
\email{beanlandk@wlu.edu}

\author{Ryan M. Causey}
\address{Department of Mathematics, University of South Carolina, Columbia, SC 29208}
\email{causeyrm@mailbox.sc.edu}

\thanks{}

\thanks{2010 \textit{Mathematics Subject Classification}. Primary: }
\thanks{\textit{Key words}: Ordinal ranks, trees}

%------------------------Abstract-------------------------------%

\begin{abstract}
%We solve a problem posed by D. Alspach and B. Sari by showing that for a separable Banach space $X$ every $(\oplus_n Y_n)_Z$-tree in $X$ with index $\omega_1$ has an infinite branch. 
For a Banach space $X$, a sequence of Banach spaces $(Y_n)$, and a Banach space $Z$ with an unconditional basis, D. Alspach and B. Sari introduced a generalization of a Bourgain tree called a $(\oplus_n Y_n)_Z$-tree in $X$. These authors also  prove that any separable Banach space admitting a $(\oplus_n Y_n)_Z$-tree with order $\omega_1$ admits a subspace isomorphic to $(\oplus_n Y_n)_Z$. In this paper we give two new proofs of this result.
\end{abstract}

\maketitle

\section{Introduction}

In a recent work \cite{AS}, D. Alspach and B. Sari prove that every separable, elastic Banach space isomorphically contains $C[0,1]$, and is therefore universal for the class of separable Banach spaces. This remarkable result answers a question from an earlier deep work of W.B. Johnson and E. Odell \cite{JO}, in which they introduced the notion of elastic Banach space and use Bourgain's $\ell_\infty$-tree index \cite{Bourgain} to show that separable, elastic spaces isomorphically contain $c_0$. A critical step in Alspach and Sari's proof is to introduce the concept of a $(\oplus_n Y_n)_Z$-tree in $X$ and the corresponding $(\oplus_n Y_n)_Z$-index, which generalizes Bourgain's tree index.  We first recall the definition of a Bourgain tree, first given in \cite{Bourgain}.  Given a sequence $(y_i)$ having dense span in some Banach space $Y$ (usually a basis for $Y$), a Banach space $X$, and a positive constant $D$, we let $$T(X,D, (y_i))= \bigl\{(x_i)_{i=1}^n\subset X: (\forall (a_i)_{i=1}^n\subset \mathbb{F})(\|\sum_{i=1}^n a_ix_i\|\leqslant \|\sum_{i=1}^n a_iy_i\| \leqslant D\|\sum_{i=1}^n a_ix_i\|\bigr)\}.$$ 
Here, $\mathbb{F}$ denotes the scalar field.  It is easy to see that $Y$ $D$-embeds into $X$ if and only if there exists an infinite sequence $(x_i)_{i=1}^\infty\subset X$ such that for all $n\in \nn$, $(x_i)_{i=1}^n\in T(X,D,(y_i))$. We consider the following generalization.   

\begin{definition} Let $X$ be a Banach space, $(Y_n)$ a sequence of Banach spaces, $Z$ a Banach space with $1$-unconditional basis $(z_n)$, and $Y_Z$ denote the unconditional sum $(\oplus_n Y_n)_Z$. For constants $C, D\geqslant 1$, let $T(C,D,X)$ consist of all finite sequences of operators $(A_i)_{i=1}^n$ such that 

\begin{enumerate}[(i)]

\item for each $1\leqslant i\leqslant n$,  $A_i:Y_i\to X$ is an operator such that for all $y\in Y_i$, $\|y\| \leqslant \|A_iy\|\leqslant C\|y\|$, 

\item for each sequence $(y_i)_{i=1}^n$ such that for each $1\leqslant i\leqslant n$, $y_i\in Y_i$, $$\|\sum_{i=1}^n A_iy_i\|\leqslant \|\sum_{i=1}^n \|y_i\| z_i \|_Z \leqslant D\|\sum_{i=1}^n A_iy_i\|.$$  

\end{enumerate}
 We say a subset $T$ of $T(C,D,X)$ is a $(\oplus_n Y_n)_Z$-\emph{tree in} $X$ \emph{with constants} $C,D$ if for any $(A_i)_{i=1}^n\in T$ and each $1\leqslant m\leqslant n$, $(A_i)_{i=1}^m\in T$.        
\label{bigdef}
\end{definition}

Given a collection $T$ of finite sequences, we say $T$ is a \emph{tree} provided that any initial segment of a member of $T$ is also a member of $T$.  A subset $S$ of $T$ is a \emph{subtree} if it is also a tree.  Then $T$ is a $(\oplus_n Y_n)_Z$-tree in $X$ with constants $C,D$ if and only if it is a subtree of $T(C,D,X)$.  We note that the definition given above is not the one given by Alspach and Sari.  They considered subtrees of the tree $S(C,D,X)$ consisting of all finite sequences $(X_i, B_i)_{i=1}^n$ such that $X_i$ is a closed subspace of $X$, $B_i:X_i\to Y_i$ is an isomorphism,  and $(B_i^{-1})_{i=1}^n\in T(C,D,X)$. Note that the one-to-one correspondence between the sequences in $T(C,D,X)$ and the sequences of $S(C,D,X)$ given by $(A_i)_{i=1}^n \leftrightarrow (A_i(Y_i), A_i^{-1})_{i=1}^n$ also identifies subtrees of $T(C,D,X)$ with subtrees of $S(C,D,X)$.  Thus, for our purposes, it is sufficient to consider $T(C,D,X)$.

%The above definition is quite clearly a generalization of the notion of a Bourgain tree. If one considers a Banach space $Z$ with and unconditional basis $(z_i)$ and the $X_i$'s to be one dimensional subspaces of $X$ then the operators are no longer necessary and Bourgain $Z$-tree is all (finite or infinite) sequences of vectors in $X$ equivalent to the the basis of $Z$. It is easy to see that Bourgain trees are closed tree on $X$. Therefore for separable $X$ it follows that if there if a Bourgain $Z$-tree has index $\omega_1$, it has an infinite branch and implying that $Z$ embeds into $X$. 

In \cite{AS}, the following generalization of a classical result for Bourgain trees is proved. 

\begin{theorem}\cite[Theorem 14]{AS} Suppose $X$ is separable, $(Y_n)$ is a sequence of separable Banach spaces, and $Z$ is a Banach space with a $1$-unconditional basis. Then for $D\geqslant 1$, if there exists $C>0$ such that there exists a $(\oplus_n Y_n)_Z$-tree with order $\omega_1$ in $X$ with constants $C,D$, $X$ has a subspace $D$-isomorphic to $Y_Z$.   
\label{question}
\end{theorem}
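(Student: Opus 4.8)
The plan is to run a transfinite induction and pruning argument on the tree $T$, in the spirit of the classical Bourgain selection theorem, but adapted to the operator-valued setting. First I would recall the ordinal index (order) of a tree: $o(T)\geqslant \alpha$ is defined by transfinite recursion, with the key fact that if $o(T)=\omega_1$ then $T$ has arbitrarily large countable order, and in fact (by a standard well-foundedness/derivative argument) we may extract from $T$ a subtree that is ``everywhere of order $\omega_1$,'' i.e. below every node the order is still $\omega_1$. So without loss of generality I work with such a $T$. The goal is to build an infinite branch, i.e. a sequence $(A_i)_{i=1}^\infty$ with every initial segment in $T$; since each $(A_i)_{i=1}^n\in T(C,D,X)$ automatically satisfies the lower $\ell$-type estimate and the two-sided comparison with $\|\sum \|y_i\|z_i\|_Z$, the union $\overline{\sspan}\bigcup_i A_i(Y_i)$ will then be $D$-isomorphic to $Y_Z=(\oplus_n Y_n)_Z$, because the $(z_n)$ are $1$-unconditional and the finite-dimensional estimates pass to the closure. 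So the real content is the \emph{selection of a branch}.

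The main step is a stabilization/compactness argument performed one coordinate at a time. Fix a countable dense set in each $Y_n$ (using separability of the $Y_n$) and a countable set of scalars. At stage $1$, I consider the set of all operators $A_1:Y_1\to X$ occurring as the first entry of some node of $T$; I want to choose one, call it $A_1$, such that the subtree $T_{A_1}=\{(B_i)_{i=2}^n : (A_1,B_2,\ldots,B_n)\in T\}$ still has order $\omega_1$. This is where a counting argument is needed: if no single choice worked, one decomposes the (uncountably many) order-$\omega_1$ nodes according to a countable ``$\varepsilon$-net'' of possible first operators — two operators $A_1,A_1'$ being identified if $\|A_1 y - A_1' y\|$ is small on the finitely many relevant test vectors $y$ — and since a countable union of trees of countable order has countable order, some net-class carries order $\omega_1$; then one uses a diagonal/closure argument (operators into a \emph{separable} $X$, uniformly bounded by $C$, so a suitable compactness in the strong operator topology on the separable domain) to produce an actual limiting operator $A_1$ with $\|y\|\leqslant\|A_1y\|\leqslant C\|y\|$ whose associated subtree still has order $\omega_1$. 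Here the separability of $X$ and of $Y_1$, plus the uniform bound $C$, are exactly what make the limiting operator exist and keep it in $T(C,D,X)$ (the inequalities in (i) and (ii) are closed conditions under pointwise limits on a dense set). Iterating, at stage $n$ I similarly select $A_n:Y_n\to X$ so that the subtree of all tails extending $(A_1,\ldots,A_n)$ still has order $\omega_1$; a genuine branch is obtained in the limit.

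I expect the main obstacle to be the \emph{limiting-operator extraction while preserving order $\omega_1$}: unlike the scalar Bourgain tree, where a node is a finite tuple of vectors in $X$ and one simply extracts a norm-convergent subsequence, here each node entry is an operator on an infinite-dimensional space $Y_n$, so ``convergence'' must be arranged only on a countable dense subset and then the limit operator must be shown to (a) be bounded below by $1$ and above by $C$ everywhere, not just on the dense set, and (b) have a tail-subtree of order $\omega_1$, which requires the $\varepsilon$-net counting to be compatible with the limit — i.e. one must take a decreasing sequence of order-$\omega_1$ subtrees indexed by finer and finer nets and diagonalize. The secondary technical point is verifying that once the whole branch $(A_i)_{i=1}^\infty$ is in hand, the formal identification $A_iy_i \mapsto \|y_i\|z_i$ extends to a $D$-isomorphism of the closed span onto $Y_Z$; this is routine given condition (ii) and $1$-unconditionality, using that finite sums are dense, but it should be stated carefully. (The paper promises \emph{two} proofs; a natural alternative, which I would present second, replaces the hands-on pruning by a descriptive-set-theoretic boundedness principle: code the ``failure to embed'' as a well-founded relation and invoke a general result that an analytic well-founded tree has countable order, contradicting order $\omega_1$ — but this still needs the same separability inputs to make the coding work.)
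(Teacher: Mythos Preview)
Your proposal takes a genuinely different route from both of the paper's proofs. You outline a direct pruning argument: at each level extract a limiting operator $A_n$ via $\varepsilon$-nets and a diagonal/SOT-compactness step while trying to keep order $\omega_1$ beneath it. This is in the spirit of the original Alspach--Sari argument that the present paper explicitly sets out to replace, and the step you yourself flag as the main obstacle --- showing that the \emph{limit} operator $A_1^*$ still has an order-$\omega_1$ subtree beneath it --- is a real gap as written: a decreasing intersection of order-$\omega_1$ subtrees need not have order $\omega_1$, and the net-class carrying order $\omega_1$ does so only as a union over (possibly uncountably many) first coordinates, not for any single one. Carrying this through requires essentially reproving, in this specific context, the theorem that a closed tree on a Polish space with uncountable order is ill-founded.

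The paper bypasses this obstacle in two ways. The first proof codes each $A_i:Y_i\to X$ as a point of a fixed Polish space $\mathfrak{L}$ of operators (a triple $(Y_i,X,\hat A_i)$, where $\hat A_i$ records the images of a fixed dense sequence in $Y_i$), verifies that $T(C,D,X)$ becomes a \emph{closed} tree on $\mathfrak{L}$, and then invokes once the general fact (Theorem~\ref{bdd}) that a closed tree on a Polish space with order $\omega_1$ is ill-founded; no limiting-operator extraction is done by hand. The second proof is more elementary still: one fixes dense sequences $(y^n_i)\subset Y_n\setminus\{0\}$, enumerates their union as $(y_j)$ so that $\{y_1,\ldots,y_k\}\subset\bigcup_{i\leqslant k}Y_i$, and sends $(A_i)_{i=1}^n\in T$ to $(A_{i_j}y_j)_{j=1}^n\in X^n$, where $y_j\in Y_{i_j}$. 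This is an extension-preserving map into the classical Bourgain tree $T(X,D,(y_j))$ of \emph{vectors} in the separable space $X$; hence $o(T(X,D,(y_j)))=\omega_1$, and the classical result gives the $D$-embedding of $Y_Z$.

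What the paper's approaches buy is precisely the elimination of the diagonalization you worry about: in the first proof it is absorbed into the black-box theorem on closed trees over Polish spaces (the novelty being the identification of the right Polish space of operators); in the second it disappears altogether because the target tree lives on $X$ itself. Your parenthetical about a descriptive-set-theoretic boundedness alternative points in the direction of the first proof, but the paper's move is to code the tree itself as a closed tree on a Polish space of operators, not to code ``failure to embed'' as a well-founded relation.
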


%We note that the argument given in \cite{AS} is three pages and somewhat technical. 
The main contribution of this note is to give two different proofs of the above result which might be considered more conceptual than the one found in \cite{AS}. Indeed, both proofs are straightforward applications of some of the descriptive set theoreitc machinery we present. 

The first proof uses a well-known result concerning trees satisfying a certain topological condition such that the members of the tree are sequences in a Polish space.  Prior to this work, the main obstruction towards using these results is the lack of an obvious Polish topology on collections of operators with varying domains.   We feel that this proof is interesting because it is a new application of a recently introduced coding of operators between separable Banach spaces. The second proof, on the other hand, is perhaps even simpler as it uses a pre-existing ordinal index on $X$. The second proof does not require the coding of operators, but instead follows from considering the relationship between the tree $T(C,D,X)$ and the tree $T(X,D,(y_i))$ given above, for an auspiciously chosen sequence $(y_i)$.   We begin with some definitions and preliminaries in the next section. In particular we recall the topology on the set of operators between separable Banach spaces and some well-known facts on Polish spaces and trees. The subsequent sections contain the proofs advertised above.

\section{Preliminaries}

%Already defined tree above?
Recall the definition of tree and subtree given in the introduction.  To avoid unnecessary technicalities, we do not define the order of a tree, but rather indicate the two properties of the order of a tree that we will need.  Given a tree $T$, the \emph{order} of $T$, denoted $o(T)$, is either a countable ordinal or the first uncountable ordinal.  Moreover, the order of a subtree of $T$ cannot exceed the order of the entire tree $T$.  If $S,T$ are any trees and if $f:S\to T$ is any function that takes proper extensions to proper extensions (recall that the members of $S$ and $T$ are finite sequences), then $o(S)\leqslant o(T)$.

%The \emph{derived tree} $T'$ of the tree $T$ is the subset of $T$ consisting of all members of $t$ having a proper extension in $T$, and $T'$ is itself a tree.  We then define the higher order derived trees by transfinite induction: We let $T^0=T$, $T^{\xi+1}=(T^\xi)'$, and $T^\xi=\cap_{\zeta<\xi}T^\zeta$ when $\xi$ is a limit ordinal.  We say $T$ is \emph{well-founded} if there exists an ordinal $\xi$ such that $T^\xi=\varnothing$.  Otherwise we say $T$ is \emph{ill-founded}. It is clear that $T$ is ill-founded if and only if there exists an infinite sequence all of whose finite initial segments lie in $T$.   We let $o(T)$ denote the smallest countable ordinal $\xi$ such that $T^\xi=\varnothing$ if such a countable ordinal exists, and otherwise we write $o(T)=\omega_1$, the first uncountable ordinal.  The convention of using $\omega_1$ here is standard.    We note that for any tree $T$, the order of a subtree of $T$ cannot exceed the order of $T$. However, for general trees, countability of the order is not necessarily equivalent to well-foundedness.   It may be that a well-founded tree has uncountable order if we do not make any topological assumptions about the tree.  We do have the following result concerning the equivalence of well-foundedness with countable order.    

If $\Lambda$ is a topological space, we say a tree $T$ on $\Lambda$ is \emph{closed} if for each $n\in\nn$, $T\cap \Lambda^n$ is closed in $\Lambda^n$ with its product topology. A separable, completely metrizable topological space is called a Polish space. For these spaces we have the following well-known theorem.

\begin{theorem}\cite{AGR} Let $T$ be a closed tree on a Polish space $P$.  Then $T$ is well-founded if and only if $o(T)<\omega_1$. 
\label{bdd}
\end{theorem}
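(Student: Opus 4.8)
The plan is to prove Theorem~\ref{bdd} by establishing both implications of the equivalence, with the forward direction (well-founded $\Rightarrow o(T)<\omega_1$) being the substantive content and the reverse direction being nearly immediate from the definitional properties of the order recorded in the preliminaries. For the easy direction, observe that if $o(T)=\omega_1$, then since the order is by definition either a countable ordinal or $\omega_1$, having order $\omega_1$ forces $T$ to contain arbitrarily long well-founded derivative sequences; alternatively, one can simply note that a well-founded tree on any set has order a countable ordinal is precisely the contrapositive, so the real work is showing that an ill-founded closed tree on a Polish space has order exactly $\omega_1$. Since the order of a subtree never exceeds $o(T)$ and the full tree has order at most $\omega_1$, it suffices to show: if $T$ is well-founded then $o(T)<\omega_1$.

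The key step is the Cantor--Bendixson / transfinite derivative analysis. I would define the derived tree $T'$ to consist of those $(x_i)_{i=1}^n\in T$ that have a proper extension in $T$, and iterate: $T^{(0)}=T$, $T^{(\alpha+1)}=(T^{(\alpha)})'$, and $T^{(\lambda)}=\bigcap_{\alpha<\lambda}T^{(\alpha)}$ at limits. The tree $T$ is well-founded iff this process reaches $\emptyset$, and $o(T)$ is (essentially) the least ordinal at which that happens. The crucial observation is that \emph{each $T^{(\alpha)}$ is again a closed tree on $P$}: the derivative of a closed tree is closed because $T^{(\alpha+1)}\cap P^n = \overline{\{s\in T^{(\alpha)}\cap P^n : s \text{ has a proper extension in } T^{(\alpha)}\}}$ — here the closure is harmless since we are intersecting a closed set appropriately, and in fact one checks $s$ has a proper extension in $T^{(\alpha)}$ is a condition whose failure set is relatively open in the already-closed $T^{(\alpha)}\cap P^n$, using that $P$ (hence $P^n$, hence any closed subset) is Polish and second countable. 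Then, because the levels are closed subsets of the Polish spaces $P^n$, one invokes the fact that a strictly decreasing transfinite sequence of closed subsets of a second-countable space must have countable length (each proper drop kills a basic open set from a fixed countable base). Hence the derivative process stabilizes at some countable ordinal $\alpha_0$; if $T$ is well-founded the stable value must be $\emptyset$, giving $o(T)\leqslant\alpha_0<\omega_1$.

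For the converse within this framework: if $T$ is ill-founded, it has an infinite branch $(x_i)_{i=1}^\infty$, and then every initial segment survives every derivative, so $T^{(\alpha)}\neq\emptyset$ for all $\alpha$, whence $o(T)=\omega_1$ by the convention that the order takes the value $\omega_1$ exactly when the tree is ill-founded. I would cite \cite{AGR} (Arvanitakis--Gasparis--?, or whichever reference the paper intends) for the packaged statement and only sketch the derivative argument, since this is classical descriptive set theory.

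The main obstacle I anticipate is the bookkeeping showing that \emph{closedness is preserved under taking derivatives}, since without this the ``strictly decreasing closed sets have countable length'' principle does not apply. The subtle point is that ``$s$ has a proper extension in $T^{(\alpha)}$'' need not be a closed condition on its own, so one must argue that adjoining limits of such $s$ either keeps them inside $T^{(\alpha+1)}$ for the right reason or show directly that the set of $s\in T^{(\alpha)}$ with \emph{no} proper extension is relatively open. This uses that for a closed tree on a Polish (in particular, first-countable and regular) space, if $s_k\to s$ with each $s_k$ extendable in $T^{(\alpha)}$, then by a diagonal/compactness-free argument using only metrizability one can — this is exactly where one needs a little care, and where I would spend the bulk of the written proof. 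Everything else is routine transfinite induction and an appeal to the cited theorem.
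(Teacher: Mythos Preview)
The paper does not prove this theorem at all; it is quoted from \cite{AGR} as a black box, so there is no ``paper's own proof'' to compare against. That said, your sketch contains a genuine gap that cannot be repaired along the lines you indicate.

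Your argument hinges on the claim that the derivative of a closed tree is again closed, so that the transfinite sequence $(T^{(\alpha)})_\alpha$ is a decreasing sequence of closed sets in a second-countable space and hence stabilizes at a countable stage. This claim is false. Take $P=\mathbb{R}$ and let
\[
T=\{\varnothing\}\cup\{(x):x\in\mathbb{R}\}\cup\{(x,y):xy=1\}.
\]
Each level $T\cap P^n$ is closed, so $T$ is a closed tree in the paper's sense. But $T'\cap P^1=\{(x):x\neq 0\}$, which is not closed. The point is that $T'\cap P^n$ is the projection of the closed set $T\cap P^{n+1}$ onto the first $n$ coordinates, and projections of closed sets in non-compact Polish spaces are only analytic in general. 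Your proposed fix---arguing that limits of extendable nodes remain extendable via ``a diagonal/compactness-free argument using only metrizability''---cannot succeed, as this example shows; there is no compactness to exploit, and metrizability alone does not help.

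The standard proofs of this result proceed differently. One route is the Kunen--Martin theorem: the proper-extension relation on $T$ is a $\mathbf{\Sigma}^1_1$ (indeed Borel) well-founded relation on a Polish space, and any such relation has rank below $\omega_1$. Another route is to fix a continuous surjection $f:\mathbb{N}^{\mathbb{N}}\to P$ and use it to build a tree $S$ on $\mathbb{N}$ with $o(T)\leqslant o(S)$ and $[S]\neq\varnothing\Leftrightarrow[T]\neq\varnothing$; since trees on $\mathbb{N}$ are countable, well-foundedness of $S$ forces $o(S)<\omega_1$. Either way, the mechanism is a reduction to the countable setting rather than a direct Cantor--Bendixson argument on $P$ itself.
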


Given a Polish space $P$, we let $F(P)$ denote the closed subsets of $P$.   It is a classical result of descriptive set theory that there exists a Polish topology on $F(P)$ \cite[page 75]{Ke}.    Another classical result is that there exists a sequence $d_n:F(P)\setminus\{\varnothing\}\to P$ of Borel functions (called \emph{Borel selectors}) such that for every $F\in F(P)\setminus \{\varnothing\}$, $\{d_n(F):n\in \nn\}$ is a dense subset of $F$ \cite{KRN}.   Fix a Polish topology on $F(\cts)$ and a sequence of Borel selectors $(d_n)$ as above, where $\cts$ denotes continuous functions on the Cantor set.   As is now standard convention, we let $\textbf{SB}$ denote all elements of $F(\cts)$ that are linear subspaces of $\cts$. The space $\textbf{SB}$ is Borel in $F(\cts)$ \cite[page 9]{DodosBook} with respect to our fixed topology on $F(\cts)$, and consequently there exists a Polish topology $\tau$ on $\textbf{SB}$ stronger than the topology inherited as a subspace of $F(\cts)$ \cite[page 75]{Ke}.  Therefore the functions $(d_n)$ are also Borel on $\textbf{SB}$. The space $\textbf{SB}$ is the coding of all separable Banach spaces and has been extensively studied in many contexts \cite{Bos, DodosBook}.   

We now endow $\textbf{SB}$ with a stronger Polish topology in the following way. Using \cite[page 82]{Ke}, we note that there is Polish topology $\tau'$ on $F(\cts)\setminus \{\varnothing\}$ stronger than the topology $\tau$ such that each $d_n$ is continuous with respect to $\tau'$.  In the sequel, the topology on $\textbf{SB}$ will always be $\tau'$, such that each Borel selector $d_n$ is continuous.

\subsection{The Polish Space $\mathfrak{L}$} As was explained in the introduction, one novelty of the current work is to isolate the proper Polish space on which to represent the $(\oplus_n Y_n)_Z$-trees. To do so, we recall a coding of the class of operators between separable Banach spaces first given in \cite{BF}.
%Using the coding of the class of separable Banach spaces, first given in \cite{Bos}, one may define the following coding of the class of operators between separable Banach spaces. 
First note that  $\textbf{SB}\times \textbf{SB}\times \cts^\nn$ is a Polish space when endowed with the product topology, where $\cts$ is endowed with its norm topology. Let $\mathfrak{L}$ consist of all triples $(X, Y, \hat{A})=(X, Y, (\hat{A}(n))_n)\in \textbf{SB}\times \textbf{SB}\times \cts^\nn$ such that for all $n\in \nn$, $\hat{A}(n)\in Y$, and such that there exists $k\in \nn$ such that for each rational linear combination $\sum_{i=1}^p q_i d_{n_i}(X)$ of $(d_n(X))$, we have $\|\sum_{i=1}^p q_i\hat{A}(n_i)\|\leqslant k\|\sum_{i=1}^p q_i d_{n_i}(X)\|$. In \cite{BCFW} it is proved that $\mathfrak{L}$ is a Borel subset of $\textbf{SB}\times \textbf{SB}\times \cts^\nn$, so that another appeal to \cite{Ke} yields the existence of a Polish topology on $\mathfrak{L}$ stronger than the subspace topology it inherits from the product topology on $\textbf{SB}\times \textbf{SB}\times \cts^\nn$. In the sequel, $\mathfrak{L}$ will always be endowed with this Polish topology.  Note that for each $n\in \nn$, $(X,Y, \hat{A})\mapsto d_n(X)$ is continuous with respect to this topology.  The purpose of this set is to act as a coding of all operators between separable Banach spaces.  To that end, for $X,Y \in \textbf{SB}$ and a bounded linear operator $A:X \to Y$, we have the unique tuple $(X,Y,\hat{A}) \in \mathfrak{L}$ where $\hat{A}(n):=Ad_n(X)$ for each $n\in \nn$; that is, $\hat{A}:\mathbb{N}\to \cts$ is defined as the image of the sequence $(d_n(X))\subset X$ under $A$. Conversely, to each tuple $(X,Y,\hat{A})$ we define the bounded linear operator $A:X\to Y$ by first defining $A$ on the dense subset $\{d_n(X):n\in \nn\}$ of $X$ by $Ad_n(X):=\hat{A}(n)$ and extending linearly and uniquely to all of $X$. It is proved in \cite[Claim 8.4]{BCFW} that the map $d_n(X)\mapsto \hat{A}(n)$ is well-defined and extends uniquely to a bounded, linear operator defined on all of $X$.

\section{Trees on $\mathfrak{L}$: The first proof}

Let $T$ be a $(\oplus_n Y_n)_Z$-tree in $X$ with constants $C,D$ having order $\omega_1$. Then $T(C,D,X)$ must have order $\omega_1$ as well, since it has a subtree with order $\omega_1$. In order to prove Theorem \ref{question}, it suffices to show that $T(C,D,X)$ is ill-founded. This will yield an infinite sequence $(A_i)_{i=1}^\infty$ all of whose finite initial segments lie in $T(C,D,X)$.  Then the operator $A:Y_Z\to X$ given by $A\sum y_i=\sum A_iy_i$ is easily seen to be a well-defined $D$-isomorphic embedding of $Y$ into $X$.   

Our method is to identify $T(C,D,X)$ with a closed tree $\mathcal{T}(C,D,X)$ on the Polish space $\mathfrak{L}$ in a way that also identifies subtrees of $T(C,D,X)$ with subtrees of $\mathcal{T}(C,D,X)$ having the same order, and that identifies well- (resp. ill-)founded trees with well-(resp. ill-)founded trees. Before defining $\mathcal{T}(C,D,X)$, we will suppose we have such an identification and finish the proof. Since the order of $T(C,D,X)$ is uncountable, so is the order of $\mathcal{T}(C,D,X)$.  Since $\mathcal{T}(C,D,X)$ is a closed tree on a Polish space, it is ill-founded by Theorem \ref{bdd}.  Therefore $T(C,D,X)$ is ill-founded as well.    

We proceed then to define the tree $\mathcal{T}(C,D,X)$ on $\mathfrak{L}$, establish the identification mentioned above,  and show that $\mathcal{T}(C,D,X)$ is closed. By embedding $X$ and $Y_Z$ isometrically into $C(2^\nn)$, we can assume $X, Y_Z\in \textbf{SB}$ and identify $Y_i$ with a subspace of $Y_Z$ in the natural way, so that $Y_i\in \textbf{SB}$.

Let $\mathcal{T}(C,D,X)$ consist of those sequences $(W_i, Z_i, \hat{A}_i)_{i=1}^n\in \mathfrak{L}^{<\nn}$ such that 

\begin{enumerate}[(i)]
\item for each $1\leqslant i\leqslant n$, $W_i=Y_i$, \item for each $1\leqslant i\leqslant n$, $Z_i=X$, \item for each $t\in \nn$ and $1\leqslant i\leqslant n$, $\|d_t(Y_i)\|\leqslant\|\hat{A}_i(t)\|\leqslant C\|d_t(Y_i)\|$,  \item for each sequence $(t_i)_{i=1}^n\subset \nn$, $$\|\sum_{i=1}^n \hat{A}_i(t_i)\|\leqslant \|\sum_{i=1}^n d_{t_i}(Y_i)\|\leqslant D\|\sum_{i=1}^n \hat{A}_i(t_i)\|.$$  
\end{enumerate}

The identification between $T(C,D,X)$ and $\mathcal{T}(C,D,X)$ is given by $(A_i)_{i=1}^n\leftrightarrow (Y_i, X, \hat{A}_i)_{i=1}^n$.   It is straightforward to check that $(A_i)_{i=1}^n\in T(C,D,X)$ if and only if $(Y_i, X, \hat{A}_i)_{i=1}^n\in \mathcal{T}(C,D,X)$.   It is also clear that this identification identifies subtrees with subtrees and preserves well- or ill-foundedness.   Note that this identification and its inverse maps proper extensions to proper extensions and so, by the remarks in the preliminaries, it preserves orders. 

We will prove that $\mathcal{T}(C,D,X)$ is a closed tree. For completeness we have included the details of the routine argument. Fix $n\in \nn$. We show that if $((W_i^j, Z_i^j, \hat{A}_i^j)_{i=1}^n)_{j=1}^\infty \subset \mathcal{T}(C,D,X)$ converges in $\mathfrak{L}$ to $(W_i, Z_i, \hat{A}_i)_{i=1}^n$, then $(W_i, Z_i, \hat{A}_i)_{i=1}^n\in \mathcal{T}(C,D,X)$. By definition of $\mathcal{T}(C,D,X)$, $W_i^j=Y_i$ and $Z^j_i=X$ for each $1\leqslant i\leqslant n$ and $j\in \nn$. By continuity of the coordinate projections $(X', Y', A')\mapsto X', Y'$ on $\mathfrak{L}$, for each $1\leqslant i\leqslant n$, $W_i=\lim_j W_i^j = \lim_j Y_i=Y_i$ and $Z_i=\lim_j Z^j_i=\lim_j X=X$.  This shows that (i) and (ii) are satisfied, and the limit $(W_i, Z_i, \hat{A}_i)_{i=1}^n$ is equal to $(Y_i, X, \hat{A}_i)_{i=1}^n$.  

By definition of $\mathcal{T}(C,D,X)$, for each $1\leqslant i\leqslant n$ and $j,t\in \nn$, $\|d_t(Y_i)\|\leqslant\|\hat{A}_i^j(t)\|\leqslant C\|d_t(Y_i)\|$ for each $j\in \nn$.  By continuity of $d_t$ and since $\hat{A}^j_i(t)\underset{j}{\to} \hat{A}_i(t)$, this same pair of inequalities is satisfied by the limit over $j$, whence (iii) is satisfied by $(Y_i, X, \hat{A}_i)_{i=1}^n$.  Similarly, for every sequence $(t_i)_{i=1}^n$ of natural numbers and every $j\in \nn$, $$\|\sum_{i=1}^n \hat{A}^j_i(t_i)\|\leqslant \|\sum_{i=1}^n d_{t_i}(Y_i)\|\leqslant D\|\sum_{i=1}^n \hat{A}^j_i(t_i)\|.$$ Using continuity of $d_{t_i}$ and the fact that $\hat{A}^j_i(t_i)\to \hat{A}_i(t_i)$, we deduce that the same pair of inequalities is satisfied after passing to the limit over $j$.   Therefore $(Y_i, X, \hat{A}_i)_{i=1}^n$ satisfies conditions (i)-(iv), and $\mathcal{T}(C,D,X)$ is closed.

\begin{rem}\upshape The reason we consider inverses of the original trees of Alspach and Sari is that, when the Borel selectors are continuous, the limit of a sequence of uniformly bounded isomorphisms with uniformly bounded inverses must be an isomorphic embedding with the same bounds on the norm and the norm of the inverse.  However, by considering a sequence of norm-one linear functionals converging weak* to zero, we can see that the limit of surjections need not be a surjection.

\end{rem}

\section{An alternative approach}

Fix a separable Banach space $Y$ and $(y_i)\subset Y$ a sequence with dense span in $Y$.  Given a separable Banach space $X$ and $D\geqslant 1$, as in the introduction, we may define the tree $$T(X, D, (y_i))=\Bigl\{(x_i)_{i=1}^n\in X^{<\nn}:( \forall (a_i)_{i=1}^n\subset \mathbb{F})( \|\sum_{i=1}^n a_ix_i \|\leqslant \|\sum_{i=1}^n a_iy_i\|\leqslant D\|\sum_{i=1}^n a_ix_i\|)\Bigr\}.$$  

This is clearly a closed tree.  Therefore if $X$ is separable, it follows that $T(X, D, (y_i))$ is well-founded if and only if $o(T(X, D, (y_i)))<\omega_1$.  It is also easy to see that $Y$ fails to $D$-embed into $X$ if and only if $o(T(X,D,(y_i)))<\omega_1$.   Indeed, by Theorem \ref{bdd}, $o(T(X,D, (y_i)))=\omega_1$ if and only if there exists a sequence $(x_i)_{i=1}^\infty$ all of whose finite initial segments lie in $T(X, D, (y_i))$.  But an infinite sequence $(x_i)_{i=1}^\infty\subset X$ has all of its finite initial segments in $T(X,D, (y_i))$  if and only if it is the image of the sequence $(y_i)$ under an isomorphic embedding of $Y$ into $X$ having norm at most $1$ the inverse of which has norm at most $D$.   

Suppose $Y=Y_Z$.  For each $n\in \nn$, let $S_n=(y^n_i)\subset Y_n\setminus \{0\}$ be a sequence dense in $Y_n$ (where, as usual, we consider $Y_n$ as a subspace of $Y$).  Let $(y_i)$ be some enumeration of $\cup_n S_n$ such that for each $k$, $(y_i)_{i=1}^k\subset \cup_{i=1}^k Y_i$.  Note that the span of $(y_i)$ is dense in $Y_Z$. In light of the previous paragraph, the Theorem \ref{question} is an immediate consequence of the following claim.  

\begin{claim} Let $X$ be a separable Banach space.  If there exists a $(\oplus_n Y_n)_Z$-tree in $X$ with constants $C,D$ having uncountable order, then $o(T(X, D, (y_i)))= \omega_1$, whence $Y_Z$ $D$-embeds into $X$.   

\end{claim}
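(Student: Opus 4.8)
The idea is to show that a $(\oplus_n Y_n)_Z$-tree $T$ in $X$ with constants $C,D$ and uncountable order gives rise, by a level-preserving (actually proper-extension-preserving) map into $T(X,D,(y_i))$, to a subtree of $T(X,D,(y_i))$ of uncountable order; since $T(X,D,(y_i))$ is closed and $X$ is separable, Theorem \ref{bdd} then forces $o(T(X,D,(y_i)))=\omega_1$, and by the discussion preceding the claim this yields the desired $D$-embedding of $Y_Z$ into $X$. As in the first proof, it suffices to work with $T(C,D,X)$ itself, since $T$ is a subtree of $T(C,D,X)$ and the order of a subtree does not exceed that of the ambient tree, so $o(T(C,D,X))=\omega_1$.

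\textbf{Constructing the map.} First I would fix, for each $k$, the bijection between the initial segment $(y_i)_{i=1}^{k}$ and its location inside the blocks $Y_1,\dots,Y_k$: for each index $i$ there is a unique $n(i)$ with $y_i=y^{n(i)}_{m(i)}\in Y_{n(i)}$, and by the choice of enumeration $n(i)\leqslant i$. Given $(A_j)_{j=1}^n\in T(C,D,X)$, I would like to produce an element of $T(X,D,(y_i))$; the natural candidate for the $i$-th coordinate is $A_{n(i)}y_i$, i.e. apply the operator attached to the block that $y_i$ lives in. The map then sends $(A_j)_{j=1}^n$ to the finite sequence $\bigl(A_{n(i)}y_i\bigr)_{i=1}^{k(n)}$, where $k(n)$ is the largest $k$ such that every $y_i$ with $i\leqslant k$ lies in $\cup_{j=1}^{n}Y_j$ — that is, $k(n)$ is chosen so that coordinates $1,\dots,k(n)$ only ever invoke the operators $A_1,\dots,A_n$ which are actually present. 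Because $k(n)$ is strictly increasing in $n$ (each new block $Y_{n+1}$ contributes at least the points of $S_{n+1}$ appearing among the first $n+1$ enumerated terms, and one checks $k(n)\ge n$), this map takes proper extensions to proper extensions, hence does not decrease the tree order.

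\textbf{Verifying the target membership.} The crux is to check that $\bigl(A_{n(i)}y_i\bigr)_{i=1}^{k(n)}$ genuinely lies in $T(X,D,(y_i))$: for every scalar choice $(a_i)_{i=1}^{k(n)}$ we need $\|\sum_i a_i A_{n(i)}y_i\|\le\|\sum_i a_i y_i\|\le D\|\sum_i a_i A_{n(i)}y_i\|$. Here I would group the sum by blocks, writing $u_j:=\sum_{i:\,n(i)=j} a_i y_i\in Y_j$ for $1\le j\le n$, so that $\sum_i a_i A_{n(i)}y_i=\sum_{j=1}^n A_j u_j$ and $\sum_i a_i y_i=\sum_{j=1}^n u_j$. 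Property (ii) of Definition \ref{bigdef} gives $\|\sum_j A_j u_j\|\le\|\sum_j\|u_j\|z_j\|_Z\le D\|\sum_j A_j u_j\|$, and since $(z_j)$ is the $1$-unconditional basis of $Z$ and $Y_Z=(\oplus_n Y_n)_Z$, we have exactly $\|\sum_j\|u_j\|z_j\|_Z=\|\sum_j u_j\|_{Y_Z}=\|\sum_i a_i y_i\|$. Combining these gives the required double inequality. I expect this block-regrouping identity — matching the abstract unconditional-sum norm with the norm computed from the enumerated dense sequence $(y_i)$ — to be the main point to get right; everything else (the map being proper-extension-preserving, closedness of $T(X,D,(y_i))$, and the final appeal to Theorem \ref{bdd}) is routine. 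Once $o(T(X,D,(y_i)))=\omega_1$ is established, the paragraph preceding the claim already records that $Y_Z$ then $D$-embeds into $X$, completing the proof.
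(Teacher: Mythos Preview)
Your approach is essentially the paper's, with one unnecessary complication that introduces a small gap. You map $(A_j)_{j=1}^n$ to a sequence of length $k(n)$, the largest $k$ with $\{y_1,\dots,y_k\}\subset\bigcup_{j\le n}Y_j$, and assert that $k(n)$ is strictly increasing. This can fail: if $y_1,y_2\in Y_1$ and $y_3\in Y_3$ (which is consistent with the enumeration condition $(y_i)_{i=1}^k\subset\bigcup_{i=1}^k Y_i$), then $k(1)=k(2)=2$, and your map collapses a proper extension. The paper sidesteps this by simply taking the output length equal to $n$: since the enumeration guarantees $n(i)\le i\le n$, every operator $A_{n(i)}$ needed for $i\le n$ is already among $A_1,\dots,A_n$, and length-$n$ to length-$n$ trivially preserves proper extensions. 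With that one-line fix your argument is correct and matches the paper's; your block-regrouping verification that $(A_{n(i)}y_i)_i\in T(X,D,(y_i))$ is in fact more explicit than what the paper records.
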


\begin{proof} Let $T$ be an $(\oplus_n Y_n)_Z$ tree in $X$ with constants $C,D$ having order $\omega_1$.  As remarked in the preliminary section, it suffices to define a map $f:T\to T(X, D, (y_i))$ such that if $s,t\in T$ and $t$ is an proper extension of $s$, $f(t)$ is a proper extension of $f(s)$. From this it will follow that $\omega_1=o(T)\leqslant o(T(X, D, (y_i)))$.    Fix $t=(A_i)_{i=1}^n\in T$.  Choose $i_1, \ldots, i_n$ such that for each $1\leqslant j\leqslant n$, $y_j\in Y_{i_j}$. Note that for each $j$, $i_j$ is unique, since $y_j\neq 0$. Since $(y_i)_{i=1}^n\subset \cup_{i=1}^n Y_i$, $i_j\leqslant n$ for each $1\leqslant j\leqslant n$.  Let $f(t)= (A_{i_j}y_j)_{j=1}^n$. By the properties of $( A_i)_{i=1}^n$, $f(t)\in T(X, D, (y_i))$.   Since each $i_j$ is unique, if $1\leqslant m\leqslant n$ and if $s$ is the initial segment of $t$ having length $m$, $f(s)=(A_{i_j}y_j)_{j=1}^m$.  Therefore $f$ maps proper extensions to proper extensions.  
\end{proof}

\end{document}